\title{Galois groups over rational function fields over skew fields}
\author{Gil Alon}
\email{gilal@openu.ac.il}
\author{Fran\c cois Legrand}
\email{francois.legrand@tu-dresden.de}
\author{Elad Paran}
\email{paran@openu.ac.il}
\address{Department of Mathematics and Computer Science, the Open University of Israel, Ra'anana 4353701, Israel}
\address{Institut f\"ur Algebra, Fachrichtung Mathematik, TU Dresden, 01062 Dresden, Germany}
\address{Department of Mathematics and Computer Science, the Open University of Israel, Ra'anana 4353701, Israel}
\newenvironment{poliabstract}[1]
{\begin{abstract}}
{\end{abstract}}
\newcommand{\id}{{\rm id}}
\newcommand{\Hh}{\mathbb{H}}
\newcommand{\Q}{\mathbb{Q}}
\newcommand{\Qq}{\mathbb{Q}}
\newcommand{\C}{\mathbb{C}}
\newcommand{\R}{\mathbb{R}}
\newcommand{\Rr}{\mathbb{R}}
\newcommand{\N}{\mathbb{N}}
\theoremstyle{plain}
\newtheorem{theorem}{Theorem}[section]
\newtheorem{corollary}[theorem]{Corollary}
\newtheorem{proposition}[theorem]{Proposition}}
\theoremstyle{remark}
\newtheorem{remark}[theorem]{Remark}
\begin{document}

\maketitle

\vspace{-4mm}

\begin{abstract}
Let $H$ be a skew field of finite dimension over its center $k$. We solve the Inverse Galois Problem over the field of fractions $H(X)$ of the ring of polynomial functions over $H$ in the variable $X$, if $k$ contains an ample field.
\end{abstract}

\begin{poliabstract}{R\'esum\'e}
Soit $H$ un corps gauche de dimension finie sur son centre $k$. Nous r\'esolvons le Probl\`eme Inverse de Galois sur le corps des fractions $H(X)$ de l'anneau des fonctions polynomiales en la variable $X$ et \`a coefficients dans $H$, si $k$ contient un corps ample.
\end{poliabstract}

\section{Introduction}

The Inverse Galois Problem over a field $k$ asks whether every finite group occurs as the Galois group of a Galois field extension of $k$. Hilbert showed in 1892, via his cele\-bra\-ted irreducibility theorem, that this problem over the field $\Qq$ of rational numbers is equi\-valent to the same problem over the field $\Q(t)$ of rational functions over $\Q$. While the problem is wide open over $\Q(t)$, it is known to have an affirmative answer over many other function fields, e.g., over the field $\C(t)$ of complex rational functions, as a consequence of Riemann's Existence Theorem. 

The aim of this note is to contribute to inverse Galois theory over skew fields, following a first work on this topic by Deschamps and Legrand (see \cite{DL19}). In this more general context, given skew fields (equivalently, division rings) $H \subseteq M$, the extension $M/H$ is said to be {\it{Galois}} if every element of $M$ which is fixed under any automorphism of $M$ fixing $H$ pointwise lies in $H$. See \cite[\S3.3]{Coh95} for more on Galois theory over skew fields.

Let $H$ be a skew field, and let $H[X]$ denote the ring of all polynomial functions over $H$ in the variable $X$. That is, $H[X]$ is the ring of all functions from $H$ to $H$ that can be expressed by sums and products of the variable $X$ and elements of $H$. We observe that, if $H$ is of finite dimension over its center $k$ and if $k$ is infinite, then $H[X]$ has a classical (right) field of fractions, denoted by $H(X)$. See \S\ref{sec:basics} for more details.

In the sequel, we solve the Inverse Galois Problem over the skew field $H(X)$, if the center of $H$ contains an ample field:

\begin{theorem}\label{thm:main} 
Let $H$ be a skew field of finite dimension over its center $k$. If $k$ contains an ample field, then every finite group is the Galois group of a Galois extension of $H(X)$. 
\end{theorem}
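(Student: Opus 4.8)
The plan is to transport the problem to the commutative Inverse Galois Problem over a rational function field, using the structure theory of $H(X)$, and then to arrange the realizing extension so that it does not collapse the underlying division algebra. Recall from \S\ref{sec:basics} that $H(X)$ is a skew field of finite dimension over its center; I will use the sharper description $H(X)\cong H\otimes_k Z$, where $Z:=k(x_1,\dots,x_m)$ is a rational function field in $m=[H:k]$ variables (the coordinate functions attached to a $k$-basis of $H$). This follows by identifying a polynomial function with its coordinate expansion and using that, $H$ being central simple over $k$, the natural map $H\otimes_k H^{\mathrm{op}}\to\mathrm{End}_k(H)$ is an isomorphism, so every $k$-linear map $H\to H$ — in particular each coordinate function — is a polynomial function; hence $H[X]\cong H\otimes_k k[x_1,\dots,x_m]$. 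Write $D:=H(X)$, a central division algebra over $Z$ of degree $d:=\sqrt{[H:k]}$ whose Brauer class is the image of $[H]\in\mathrm{Br}(k)$ under $\mathrm{Br}(k)\to\mathrm{Br}(Z)$. If $H=k$ then $D=Z=k(x_1)$ is a rational function field over a field containing an ample field, and the assertion is the classical one (Pop's theorem plus base change); so assume $H\neq k$, whence $m\ge 2$.

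First I would record the reduction: if $L/Z$ is a finite Galois extension with group $G$ such that $D\otimes_Z L$ is again a division ring, then $D\otimes_Z L/D$ is a Galois extension of skew fields with group $G$. Indeed, $L$ is the center of the central simple $L$-algebra $D\otimes_Z L$, so any automorphism of $D\otimes_Z L$ fixing $D$ pointwise fixes $Z$, preserves $L$, restricts to some $\sigma\in\mathrm{Gal}(L/Z)$, and therefore equals $\mathrm{id}_D\otimes\sigma$; moreover $(D\otimes_Z L)^{\{\mathrm{id}_D\otimes\sigma:\sigma\in G\}}=D\otimes_Z L^G=D$ since $D$ is free over $Z$. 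Thus it suffices, for each finite $G$, to produce such an $L$. To this end, write $Z=F(x_1)$ with $F:=k(x_2,\dots,x_m)\supseteq k\supseteq k_0$ for an ample field $k_0$, and set $D_F:=H\otimes_k F$, a division algebra over $F$ of degree $d$, so that $D=D_F\otimes_F Z$. By the solution of the regular Inverse Galois Problem over ample fields (Pop), $G$ is the Galois group of a regular extension $E_0/k_0(x_1)$ (i.e. $k_0$ is algebraically closed in $E_0$); and since the construction is carried out by patching over $k_0$, one may further take $E_0$ so that the corresponding $G$-cover $C\to\mathbb{P}^1_{k_0}$ — with $C$ the smooth projective model of $E_0$ — is completely split over some $k_0$-rational point, so that in particular $C(k_0)\neq\emptyset$. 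As a group regularly realizable over $k_0(x_1)$ is regularly realizable over $F(x_1)$ for any field $F\supseteq k_0$, the ring $L:=E_0\otimes_{k_0(x_1)}Z$ is a field, Galois over $Z$ with group $G$; concretely $L=F(C_F)$ with $C_F:=C\times_{k_0}F$ a smooth projective geometrically integral $F$-curve satisfying $C_F(F)\supseteq C(k_0)\neq\emptyset$, and $D\otimes_Z L=D_F\otimes_F F(C_F)$.

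The main point is then the following lemma, which I expect to be the real content of the argument: \emph{if $A$ is a finite-dimensional central division algebra over a field $F$ and $C$ is a smooth projective geometrically integral $F$-curve with $C(F)\neq\emptyset$, then $A\otimes_F F(C)$ is a division ring.} I would prove it as follows. The index of $A\otimes_F F(C)$ divides $\deg A$. Conversely, let $L'/F(C)$ be any finite extension splitting $A\otimes_F F(C)$; write $L'=F(C')$ with $C'$ a smooth projective $F$-curve and $\pi\colon C'\to C$ the induced finite flat morphism of degree $[L':F(C)]$. A point of $\mathrm{SB}(A)\bigl(F(C')\bigr)$ extends — $C'$ being a smooth projective curve and $\mathrm{SB}(A)_F$ projective — to a morphism $\varphi\colon C'\to\mathrm{SB}(A)_F$. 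Choosing a closed point $Q$ in the fibre of $\pi$ over an $F$-rational point of $C$ gives $[\kappa(Q):F]\le[L':F(C)]$, while $\varphi(Q)$ is a closed point of $\mathrm{SB}(A)_F$ whose residue field splits $A$, so $\deg A=\mathrm{ind}(A)\le[\kappa(\varphi(Q)):F]\le[\kappa(Q):F]\le[L':F(C)]$. Hence every splitting field of $A\otimes_F F(C)$ has degree $\ge\deg A$; since the index of a central simple algebra is attained by some splitting field and here also divides $\deg A$, it equals $\deg A$, i.e. $A\otimes_F F(C)$ is a division ring. Applying this with $A=D_F$ and $C=C_F$ shows that $D\otimes_Z L$ is a division ring, and combining with the reduction above finishes the proof.

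In this scheme Step 1 (the structure of $H(X)$) and the Galois-descent bookkeeping in the reduction are routine, and producing \emph{some} $G$-extension of $Z$ is immediate from the classical theory once $k$ contains an ample field. The hard part — and the place where the ample hypothesis is genuinely exploited a second time — is ensuring that the chosen realization does not split (or partially split) $D$: this is what forces the use of a covering curve with a rational point and of the Severi--Brauer/index computation in the lemma. The subsidiary technical points (linear disjointness of $E_0$ and $Z$ over $k_0(x_1)$, flatness of $\pi$ at the chosen point, and that patching over an ample field yields a completely split $k_0$-rational point) are standard and I would treat them briefly.
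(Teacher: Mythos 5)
Your proposal is correct in substance, but it takes a genuinely different route from the paper for the main Galois-theoretic step. The paper also first identifies $H(X)$ with a localization of a polynomial ring in $n=[H:k]$ central variables (Proposition \ref{prop:isom} via Wilczynski, Corollary \ref{coro:isom}), splits off one variable exactly as you split $Z=F(x_1)$ (Proposition \ref{prop:same}, Proposition \ref{prop:center}), and then simply quotes \cite[Th\'eor\`eme B]{DL19} for the skew field $H_c(t_1,\dots,t_{n-1})$, whose center $k(t_1,\dots,t_{n-1})$ still contains an ample field, finishing with the transport-of-structure Proposition \ref{prop:iso}. You instead re-derive the content of \cite[Th\'eor\`eme B]{DL19}: your reduction (a Galois $L/Z$ with $D\otimes_Z L$ a division ring yields an outer Galois extension of $D$ with the same group) plus your Severi--Brauer index lemma is essentially that theorem, with ampleness used twice, once for Pop's realization and once, via the rational point on the covering curve, to keep $D$ from being split. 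What your route buys is independence from \cite{DL19} and an explicit picture of where the hypothesis enters; what the paper's route buys is brevity and weaker input (it needs only the statement of \cite[Th\'eor\`eme B]{DL19}, whereas you need the strengthened form of Pop's theorem giving a regular realization whose smooth projective model has a $k_0$-rational point, e.g.\ a totally split fiber -- this is available in the literature on ample fields but must be cited, not just attributed to ``patching''). A few small repairs: the identification $H[X]\cong H\otimes_k k[x_1,\dots,x_m]$ is exactly Proposition \ref{prop:isom} (your sketch via $H\otimes_k H^{\mathrm{op}}\cong\mathrm{End}_k(H)$ is the right idea, but faithfulness needs $k$ infinite, which ampleness guarantees); $H\otimes_k F$ being division for purely transcendental $F/k$ deserves a word (it follows from your own lemma applied inductively, or from the Ore description); over a possibly imperfect $k$ (e.g.\ $\mathbb{F}_p((u))$ is ample) you should take $C'$ to be the normal, i.e.\ regular, projective model rather than ``smooth'', which still suffices to extend the rational map to $\mathrm{SB}(A)$. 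Finally, your lemma admits a shortcut avoiding Severi--Brauer varieties: completing $F(C)$ at the $F$-rational point gives $F((u))$, and $A\otimes_F F((u))\cong A((u))$ is a division ring, so the index cannot drop from $F$ to $F(C)$.
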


\noindent
Recall that a field $k$ is {\it{ample}} (or {\it{large}}) if every smooth geometrically irreducible $k$-curve has either zero or infinitely many $k$-rational points. Ample fields, which were introduced by Pop in \cite{Pop96} (and which are necessarily infinite), include algebraically closed fields, some complete valued fields (e.g., $\Qq_p$, $\Rr$, $\kappa((T))$), the field $\Qq^{\rm{tr}}$ of all totally real algebraic numbers, etc. See \cite{Jar11}, \cite{BSF13}, and \cite{Pop14} for more details. Consequently, a special (but fundamental) case of Theorem \ref{thm:main} is that the Inverse Galois Problem has an affirmative answer over $\Hh(X)$, where $\Hh$ denotes the skew field of Hamilton's quaternions.

Given a skew field $H$ of finite dimension over its center $k$, with $k$ infinite, the ring $H[X]$ is one possible natural generalization of the usual polynomial ring in one variable over an infinite field. Another one is the polynomial ring $H_c[t]$, where $t$ is a central indeterminate, commuting with the coefficients\footnote{We adopt a different notation from that of \cite{DL19}, where this ring is denoted by $H[t]$, in order to distinguish between the cases of central variables and non-central ones. We note that there are alternative notations for this ring in the literature, such as $H[x, \id ,0]$ in \cite{Ore33}, or $H_L[t]$ in \cite{GM65}.} \footnote{Throughout this note, we use upper-case letters to denote non-central variables and lower case-letter to denote central ones, to add a visual distinction between the two.}. While these rings are isomorphic in the special case $H=k$, it is not clear that such an isomorphism exists if $H$ is non-commutative. This suggests that the Inverse Galois Problem over the field of fractions $H_c(t)$ of $H_c[t]$, which is studied by Deschamps and Legrand, and the same problem over $H(X)$ are a priori independent. In particular, although the Inverse Galois Problem over $H_c(t)$ has a positive answer if $k$ contains an ample field (see \cite[Th\'eor\`eme B]{DL19}), Theorem \ref{thm:main} has its own merits and, as \cite[Th\'eor\`eme B]{DL19}, extends the deep result of Pop solving the Inverse Galois Problem over the field $k(t)$, if $k$ contains an ample field.

We prove Theorem \ref{thm:main} in \S\ref{sec:proof}, by reducing it to the case settled by Deschamps and Legrand. The main observation needed is that the ring $H[X]$ is isomorphic to the ring $H_c[t_1, \dots, t_n]$ of polynomials over $H$ in $n$ central variables, where $n$ denotes the dimension of $H$ over its center (see Proposition \ref{prop:isom}). This follows from a theorem of Wilczynski \cite[Theorem 4.1]{Wil14}. We also make use of the general observation that the Inverse Galois Problem over skew fields is ``algebraic"; see Proposition \ref{prop:iso}.

\section{Polynomial rings and fields of fractions} \label{sec:basics}

\subsection{Polynomial rings} \label{ssec:basics1}

For this subsection, let $H$ be a skew field. 

The {\it{polynomial ring}} $H_c[t]$ in the {\it{central}} variable $t$ is the set of all sequences $(a_n)_{n \in \N}$ of elements of $H$ such that $a_n =0$ for all but finitely many $n$. As in the commutative setting, the addition is defined componentwise and the multiplication is defined by $(a_n)_n \cdot (b_n)_n = (c_n)_n$, where $c_n = \sum_{l+m=n} a_l b_m$ for every $n \in \N$. Setting $(a_n)_n = \sum_n a_n t^n$, one has $at=ta$ for every $a \in H$, thus justifying the terminology ``central". If $H$ is a field, then $H_c[t]$ is nothing but the usual polynomial ring in the variable $t$ over $H$. In the sense of Ore \cite{Ore33}, $H_c[t]$ is the skew polynomial ring $H[t, \alpha, \delta]$ in the variable $t$, where the automorphism $\alpha$ is the identity of $H$ and the derivation $\delta$ is 0. One can iteratively construct rings of polynomials in several central variables over $H$, by putting $H_c[t_1,t_2] = (H_c[t_1])_c[t_2]$, $H_c[t_1,t_2,t_3] = (H_c[t_1,t_2])_c[t_3]$, and so on. Since the variables are all central, the order in which they are added does not change the ring obtained, up to isomorphism. 

On the other hand, let $H\langle X \rangle$ be the free algebra in one symbol $X$ over $H$. That is, $H\langle X \rangle$ is the algebra spanned by all words whose letters are elements of $H$ or $X$. For an element $a \in H$ and $f(X) \in H\langle X \rangle$, the substitution $f(a) \in H$ is defined in the obvious way, by replacing each occurrence of $X$ in $f(X)$ by $a$, and computing the resulting value in $H$. For a fixed $a \in H$, the map $f(X) \mapsto f(a)$ is a homomorphism from $H\langle X \rangle$ to $H$. We say that $f$ {\it{vanishes}} at $a$ if $f(a) = 0$. Let $I$ be the (two-sided) ideal of $H\langle X \rangle$ which consists of all $f(X) \in H\langle X \rangle$ that vanish at all $a \in H$. Then the ring $H[X]$ is defined as the quotient $H\langle X \rangle/I$, and it is isomorphic to the ring of polynomial functions over $H$. Note that, if $H$ is an infinite field, then this definition coincides with the usual definition of the polynomial ring in the variable $X$ over $H$.

\subsection{Classical fields of fractions} \label{ssec:basics2}

For this subsection, let $R$ be a non-zero ring, not necessarily commu\-tative. Recall that $R$ is an {\it{integral domain}} if, for all $r \in R \setminus \{0\}$ and $s \in R \setminus \{0\}$, one has $sr \not=0$ and $rs \not=0$. From now on, suppose $R$ is an integral domain.

A {\it{classical right quotient ring for $R$}} is an overring $S \supseteq R$ such that every non-zero element of $R$ is invertible in $S$, and such that every element of $S$ can be written as $ab^{-1}$ for some $a \in R$ and some $b \in R \setminus \{0\}$. We say that $R$ is a {\it{right Ore domain}} if, for all non-zero elements $x$ and $y$ of $R$, there exist $r$ and $s$ in $R$ such that $xr = ys \not=0$. By \cite[Theorem 6.8]{GW04}, if $R$ is a right Ore domain, then $R$ has a classical right quotient ring $H$ which is a skew field and, by \cite[Proposition 1.3.4]{Coh95}, $H$ is unique up to isomorphism. We then say that $H$ is the {\it{classical right field of fractions}} of $R$.

If $H$ denotes an arbitrary skew field, then the polynomial ring $H_c[t]$ in the central variable $t$ over $H$ is an integral domain,  since the degree is additive on products. Moreover, $H_c[t]$ is a right Ore domain, by \cite[Theorem 2.6 and Corollary 6.7]{GW04}. By an easy induction, given a positive integer $n$, the polynomial ring $H_c[t_1, \dots, t_n]$ in $n$ central variables over $H$ has a classical right field of fractions, which we denote by $H_c(t_1, \dots, t_n)$.

\begin{proposition} \label{prop:same}
Let $H$ be a skew field and $n \geq 2$. Then the equality $H_c(t_1,\ldots,t_n) = (H_c(t_1, \dots, t_{n-1}))_c(t_n)$ holds.
\end{proposition}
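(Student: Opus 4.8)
The plan is to fix $R = H_c[t_1,\dots,t_{n-1}]$ and let $K = H_c(t_1,\dots,t_{n-1})$ be its classical right field of fractions, which exists by the discussion preceding the statement and is a skew field. By the iterative definition of the polynomial ring in central variables, $H_c[t_1,\dots,t_n] = R_c[t_n]$, and $H_c(t_1,\dots,t_n)$ is, by definition, the classical right field of fractions of $R_c[t_n]$; write $F$ for the latter. Since $R \subseteq K$ we have $R_c[t_n] \subseteq K_c[t_n]$. The core of the argument is to realize $K_c[t_n]$ as a subring of $F$ and to check that $F$ is a classical right quotient ring for $K_c[t_n]$; the uniqueness of the classical right field of fractions (\cite[Proposition 1.3.4]{Coh95}) then gives $F = K_c(t_n) = (H_c(t_1,\dots,t_{n-1}))_c(t_n)$, as desired.

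First I would embed $K$ into $F$: every element of $K$ is of the form $ab^{-1}$ with $a, b \in R$ and $b \not= 0$, and, regarding $a$ and $b$ as constant polynomials in $R_c[t_n] \subseteq F$, the element $b$ is invertible in $F$, so $ab^{-1} \in F$; this yields a ring embedding $K \hookrightarrow F$ extending the inclusion of $R$. Moreover $t_n$ commutes with every $b \in R$ inside $R_c[t_n]$, hence with every $b^{-1}$ (multiply $t_n b = b t_n$ by $b^{-1}$ on both sides), hence with all of $K$ inside $F$. By the universal property of the central polynomial ring, there is a ring homomorphism $K_c[t_n] \to F$ extending $K \hookrightarrow F$ and sending the variable to $t_n$. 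To see it is injective, take $p = \sum_i c_i t_n^i$ with $c_i \in K$ that maps to $0$ in $F$: using the right Ore property of $R$ recorded in \S\ref{ssec:basics2}, bring the (finitely many) $c_i$ to a common right denominator, $c_i = a_i b^{-1}$ with $a_i \in R$ and a single $b \in R \setminus \{0\}$; since $b$ is central, $p$ maps to $(\sum_i a_i t_n^i) b^{-1}$, so $\sum_i a_i t_n^i = 0$ in $F$, hence already in $R_c[t_n]$, forcing all $a_i = 0$ and all $c_i = 0$. Thus $K_c[t_n]$ identifies with a subring of $F$ containing $R_c[t_n]$.

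It then remains to verify that $F$ is a classical right quotient ring for $K_c[t_n]$. For invertibility, let $p \in K_c[t_n]$ be nonzero; bringing its coefficients to a common right denominator $b \in R \setminus \{0\}$ as above and using that $b$ is central, write $p = q b^{-1}$ with $q \in R_c[t_n]$, where $q \not= 0$ because $p \not= 0$; as a nonzero element of $R_c[t_n]$, $q$ is invertible in $F$, and so is $b$, hence $p$ is invertible in $F$. For the quotient presentation, any element of $F$ is $u v^{-1}$ with $u, v \in R_c[t_n] \subseteq K_c[t_n]$ and $v \not= 0$. Therefore $F$ is a classical right quotient ring for $K_c[t_n]$ that is a skew field, and the uniqueness statement gives the claimed equality.

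The step I expect to be the main obstacle is this common–right–denominator manipulation: one must ensure that finitely many elements of $K$ can be put over a single right denominator lying in $R$, and that this denominator can be chosen central so that it factors cleanly out of a polynomial in $t_n$; this is exactly where the right Ore property of $R$ is used, and the left/right conventions must be tracked carefully throughout.
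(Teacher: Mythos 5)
Your proof is correct, and at its core it runs on the same computation as the paper's: write the coefficients of a polynomial in $t_n$ over $H_c(t_1,\dots,t_{n-1})$ as right fractions with numerators and denominators in $H_c[t_1,\dots,t_{n-1}]$, and use that $t_n$ commutes with everything in sight. The packaging, however, is genuinely different. The paper argues by double inclusion, implicitly treating both fields as sets of right fractions inside a common object, and it never needs a common denominator: to see that $\sum_{l} a_l t_n^l$ with $a_l \in H_c(t_1,\dots,t_{n-1})$ lies in $H_c(t_1,\dots,t_n)$ it writes each $a_l = b_l c_l^{-1}$ separately and observes that all the factors already lie in that skew field. You instead characterize $H_c(t_1,\dots,t_n)$ as a classical right quotient ring of $K_c[t_n]$ (with $K=H_c(t_1,\dots,t_{n-1})$) and invoke uniqueness of the classical right field of fractions; this costs you the injectivity check and the common-right-denominator lemma, but it makes explicit the identifications that the paper's ``equality'' leaves implicit, which is a gain in rigor. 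One justification should be corrected: the common denominator $b \in H_c[t_1,\dots,t_{n-1}]$ is \emph{not} central in general, and the Ore condition gives you no way to choose it central (contrary to what your closing paragraph suggests); what your manipulation actually uses, and what is true, is only that $b$ commutes with $t_n$, because $t_n$ is a central variable over that ring, so the step $\sum_i a_i b^{-1} t_n^i = \bigl(\sum_i a_i t_n^i\bigr) b^{-1}$ goes through unchanged. Similarly, the embedding $K \hookrightarrow F$ you set up deserves a word on well-definedness (if $ab^{-1}=a'b'^{-1}$ in $K$, why do the images agree in $F$?); this follows from the same uniqueness/universal property of the Ore quotient ring you cite at the end, and is at a level of detail the paper itself also omits.
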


\begin{proof} 
First, it is clear that the inclusion $H_c[t_1,\ldots,t_n] \subseteq (H_c(t_1, \dots, t_{n-1}))_c(t_n)$ holds. As every element of $H_c(t_1,\ldots,t_n)$ can be written as $fg^{-1}$ with $f$ and $g$ in $H_c[t_1,\ldots,t_n]$, we actually have $H_c(t_1,\ldots,t_n) \subseteq (H_c(t_1, \dots, t_{n-1}))_c(t_n)$. 

For the converse, take a polynomial $f = \sum_{l = 0}^m a_l t_n^l$ with $a_l \in H_c(t_1, \dots, t_{n-1})$ for every $l \in \{0, \dots, m\}$. As before, we can write $a_l = b_l c_l^{-1}$ with $b_l \in H_c[t_1, \dots, t_{n-1}]$ and $c_l \in H_c[t_1, \dots, t_{n-1}] \setminus \{0\}$, for every $l \in \{0, \dots, m\}$. Since $H_c[t_1, \dots, t_{n-1}] \subseteq H_c[t_1, \dots, t_n] \subseteq H_c(t_1, \dots, t_n)$ and $t_n \in H_c(t_1, \dots, t_n)$, we get that $f= \sum_{l = 0}^m b_l c_l^{-1} t_n^l$ is in $H_c(t_1, \dots, t_n)$. This shows the desired inclusion $(H_c(t_1, \dots, t_{n-1}))_c(t_n) \subseteq H_c(t_1,\ldots,t_n)$, since every element of $(H_c(t_1, \dots, t_{n-1}))_c(t_n)$ can be written as $f g^{-1}$ with $f$ and $g$ in $(H_c(t_1, \dots, t_{n-1}))_c[t_n]$.
\end{proof}

\begin{proposition} \label{prop:center}
Let $H$ be a skew field of center $k$ and let $n$ be a positive integer. The center of $H_c(t_1, \dots, t_n)$ equals $k(t_1, \dots, t_n)$. Moreover, if the dimension of $H$ over $k$ is finite, then the equality ${\rm{dim}}_{k(t_1, \dots, t_n)} H_c(t_1, \dots, t_n) = {\rm{dim}}_k H$ holds.
\end{proposition}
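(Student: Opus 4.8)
The plan is to handle the assertion about the center and the one about the dimension by somewhat different means: the center computation is reduced to the case $n = 1$ and then carried out using that $H_c[t]$ is a principal right ideal domain, while the dimension formula is obtained directly for all $n$ from a tensor product description of $H_c[t_1, \dots, t_n]$.

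First I would record that $Z(H_c[t]) = k[t]$: if $\sum_i a_i t^i$ commutes with every $b \in H$ then, since $t$ is central, $a_i b = b a_i$ for all $i$ and all $b \in H$, whence $a_i \in k$ for all $i$; the reverse inclusion is clear. To pass to $H_c(t)$, note that $H_c[t]$ is a principal right ideal domain, the right division algorithm being available because the leading coefficient of a non-zero element of $H_c[t]$ is invertible in $H$ and $t$ is central. Let $z \in Z(H_c(t))$ and write $z = ab^{-1}$ with $a, b \in H_c[t]$, $b \neq 0$. The set $I = \{g \in H_c[t] : zg \in H_c[t]\}$ is a right ideal containing $b$, hence $I = c\, H_c[t]$ for some non-zero $c$, which we may take monic after right multiplication by a unit of $H$. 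For every non-zero $h \in H$ we have $z(h^{-1}ch) = h^{-1}(zc)h \in H_c[t]$, so $h^{-1}ch \in I = c\,H_c[t]$; comparing degrees shows $h^{-1}ch = cu$ with $u \in H$, and comparing leading coefficients forces $u = 1$, i.e.\ $ch = hc$. Thus $c$ commutes with $H$ and with $t$, so $c \in Z(H_c[t]) = k[t]$; then $zc \in H_c[t]$ is central, hence lies in $k[t]$, and $z = (zc)c^{-1} \in k(t)$. As the inclusion $k(t) \subseteq Z(H_c(t))$ is immediate, this settles $n = 1$. For $n \geq 2$ I would induct: by Proposition~\ref{prop:same}, $H_c(t_1, \dots, t_n) = L_c(t_n)$ with $L = H_c(t_1, \dots, t_{n-1})$, whose center is $k(t_1, \dots, t_{n-1})$ by the inductive hypothesis, so the case $n=1$ applied to the skew field $L$ gives $Z(H_c(t_1, \dots, t_n)) = k(t_1, \dots, t_{n-1})(t_n) = k(t_1, \dots, t_n)$.

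For the dimension formula, suppose $\dim_k H = d$ is finite. Since all the $t_i$ are central, $H_c[t_1, \dots, t_n]$ is canonically identified with $H \otimes_k k[t_1, \dots, t_n]$, and localizing at the central multiplicative set $k[t_1, \dots, t_n] \setminus \{0\}$ identifies the subring $H \otimes_k k(t_1, \dots, t_n)$ of $H_c(t_1, \dots, t_n)$ with a classical right quotient ring of $H_c[t_1, \dots, t_n]$. This subring is a domain, being contained in a skew field, and it has dimension $d$ over the field $k(t_1, \dots, t_n)$, hence it is itself a skew field, multiplication by any non-zero element being an injective and therefore bijective linear endomorphism. By uniqueness of the classical right field of fractions it equals $H_c(t_1, \dots, t_n)$, and so $\dim_{k(t_1, \dots, t_n)} H_c(t_1, \dots, t_n) = d = \dim_k H$.

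The main obstacle is the inclusion $Z(H_c(t)) \subseteq k(t)$: a central element of the fraction field does not obviously admit a central denominator, and the argument above gets around this by exploiting the principal-right-ideal structure of $H_c[t]$ together with a leading-coefficient comparison. The rest is routine, once one notices that finite-dimensionality is precisely what guarantees that $H \otimes_k k(t_1, \dots, t_n)$ is already a skew field and hence may be identified with the Ore localization.
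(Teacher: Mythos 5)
Your proof is correct, but it is genuinely more self-contained than the paper's. For the center, the paper simply quotes Cohn's result that the center of $K_c(t)$ is $C(t)$ when $K$ is a skew field with center $C$, and then iterates Proposition~\ref{prop:same}, exactly as you do in your induction step; the difference is that you actually prove the $n=1$ case, via the principal right ideal structure of $H_c[t]$: taking a monic generator $c$ of the right ideal of admissible denominators of a central element and comparing degrees and leading coefficients of $h^{-1}ch$ against $cH_c[t]$ is a clean way to produce a \emph{central} denominator, which is indeed the only non-routine point. For the dimension, the paper invokes \cite[Proposition 9]{DL19}, giving $H_c(t_1)\cong H\otimes_k k(t_1)$, and again iterates Proposition~\ref{prop:same}, whereas you prove the multivariable statement directly: you identify $H_c[t_1,\dots,t_n]$ with $H\otimes_k k[t_1,\dots,t_n]$, observe that the central localization $H\otimes_k k(t_1,\dots,t_n)$ is a domain that is finite-dimensional over $k(t_1,\dots,t_n)$ and hence a skew field, and conclude that it is the classical right field of fractions (one can phrase the last step even more directly: any $ab^{-1}$ with $a,b\in H_c[t_1,\dots,t_n]$, $b\neq 0$, already lies in this subring since $b$ is invertible there). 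What the paper's route buys is brevity, at the cost of two external citations; what yours buys is a complete argument from first principles, and in the dimension part it even dispenses with the iteration through Proposition~\ref{prop:same}, since finite-dimensionality is exploited once and for all in $n$ variables.
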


\begin{proof}
By, e.g., \cite[Proposition 2.1.5]{Coh95}, if $K$ is an arbitrary skew field of center $C$, then $C(t)$ is the center of $K_c(t)$. Hence, by iterating Proposition \ref{prop:same}, the center of $H_c(t_1, \dots, t_n)$ equals $k(t_1, \dots, t_n)$. Now, suppose ${\rm{dim}}_k H$ is finite. Then, by \cite[Proposition 9]{DL19}, we have $H_c(t_1) \cong H \otimes_k k(t_1)$. Consequently, ${\rm{dim}}_{k(t_1)} H_c(t_1)$ is finite and equals ${\rm{dim}}_k H$. As before, it remains to iterate Proposition \ref{prop:same} to conclude the proof.
\end{proof}

\begin{proposition}\label{prop:isom} 
Let $H$ be a skew field of finite dimension $n$ over its center $k$. Assume $k$ is infinite. Then the ring $H[X]$ is isomorphic to $H_c[t_1,\dots, t_n]$. 
\end{proposition}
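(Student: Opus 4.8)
The plan is to work with the concrete description of $H[X]$ as the ring of polynomial functions $H \to H$, which is permitted by \S\ref{ssec:basics1}, and to write down an explicit isomorphism onto it from $H_c[t_1,\dots,t_n]$. First I would fix a $k$-basis $e_1,\dots,e_n$ of $H$ and, for each $a \in H$, write $a = \sum_{i=1}^n \lambda_i(a)\, e_i$ with $\lambda_i(a) \in k$; this defines $k$-linear ``coordinate'' functions $\lambda_1,\dots,\lambda_n \colon H \to k$, and the map $a \mapsto (\lambda_1(a),\dots,\lambda_n(a))$ is a $k$-linear bijection $H \to k^n$. I would regard all of these as elements of the ring $\mathrm{Map}(H,H)$ of all functions $H \to H$ under pointwise addition and multiplication, inside which $H[X]$ is the subring generated by the constant maps (a copy of $H$) together with the identity map, the latter playing the role of $X$.

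The crucial input is \cite[Theorem 4.1]{Wil14}: it guarantees that each coordinate function $\lambda_i$ is itself a polynomial function, i.e.\ lies in $H[X]$ (this uses that $H$ is finite-dimensional over $k$ and that $k$ is infinite). Granting it, the identity map equals $\sum_{i=1}^n \lambda_i e_i$ in $\mathrm{Map}(H,H)$, so $H[X]$ is in fact generated by $H$ together with $\lambda_1,\dots,\lambda_n$. Moreover each $\lambda_i$ takes values in $k = Z(H)$, hence commutes with every element of $\mathrm{Map}(H,H)$; in particular the $\lambda_i$ are central and pairwise commuting. Therefore there is a well-defined ring homomorphism $\phi \colon H_c[t_1,\dots,t_n] \to H[X]$ restricting to the identity on $H$ and sending $t_i \mapsto \lambda_i$ for each $i$, and $\phi$ is surjective because its image contains $H$ and all the $\lambda_i$, which together generate $H[X]$.

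It remains to prove that $\phi$ is injective. I would take $P = \sum_{\alpha} c_\alpha\, t_1^{\alpha_1}\cdots t_n^{\alpha_n}$ with $c_\alpha \in H$ in $\ker \phi$ and evaluate the zero function $\phi(P)$ at an arbitrary $a \in H$, obtaining $\sum_{\alpha} c_\alpha\, \lambda_1(a)^{\alpha_1}\cdots \lambda_n(a)^{\alpha_n} = 0$. As $a$ ranges over $H$, the tuple $(\lambda_1(a),\dots,\lambda_n(a))$ ranges over all of $k^n$, so $\sum_{\alpha} c_\alpha\, x_1^{\alpha_1}\cdots x_n^{\alpha_n} = 0$ for every $(x_1,\dots,x_n) \in k^n$. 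Expanding each $c_\alpha$ in the basis $e_1,\dots,e_n$ and using the $k$-linear independence of the $e_j$, this turns into finitely many polynomial identities over $k$ in $n$ variables valid on all of $k^n$; since $k$ is infinite, all coefficients vanish, whence each $c_\alpha = 0$ and $P = 0$. Thus $\phi$ is an isomorphism.

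The one genuinely substantial point is the claim of the second paragraph that the coordinate functions $\lambda_i$ are polynomial functions; this is exactly where \cite[Theorem 4.1]{Wil14} is needed, and it is what I expect to be the main obstacle if one wants a self-contained argument, everything else being elementary linear algebra over the infinite field $k$. If \cite[Theorem 4.1]{Wil14} is instead available already in the shape $H[X] \cong H \otimes_k k[t_1,\dots,t_n]$, one only has to combine it with the isomorphism $H \otimes_k k[t_1,\dots,t_n] \cong H_c[t_1,\dots,t_n]$ obtained by iterating \cite[Proposition 9]{DL19}, as in the proof of Proposition \ref{prop:center}.
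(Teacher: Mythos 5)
Your proof is correct and is essentially the approach the paper itself takes: the paper establishes the general case by citing \cite[Theorem 4.1]{Wil14} (and \cite[Theorem 5]{AP19}) outright, and then writes out exactly your construction in the special case $H=\Hh$, where the coordinate functions with respect to the basis $1,i,j,k$ are made explicit by the identities $y_1=\frac{1}{4}(X-iXi-jXj-kXk)$, etc., with surjectivity from $X=y_1+iy_2+jy_3+ky_4$ and injectivity from a nonzero real polynomial not vanishing on all of $\R^4$ --- precisely your argument specialized. The one claim you defer to the citation, namely that the $k$-linear coordinate maps $\lambda_i$ lie in $H[X]$, is indeed the crux, and it is true for any skew field of finite dimension over its center (it does not even need $k$ infinite): since $H\otimes_k H^{\mathrm{op}}\cong\mathrm{End}_k(H)$, every $k$-linear map $H\to H$ has the form $x\mapsto\sum_j a_jxb_j$ and is therefore a polynomial function; the quaternion identities above are exactly this fact made explicit. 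Whether \cite[Theorem 4.1]{Wil14} is literally phrased that way is a citation-matching detail, but the surrounding argument --- well-definedness of $\phi$ from centrality of the $\lambda_i$, surjectivity from $X=\sum_i\lambda_ie_i$, injectivity from the tuple $(\lambda_1(a),\dots,\lambda_n(a))$ exhausting $k^n$ together with $k$ infinite and the $k$-independence of the basis --- is sound and mirrors the paper's, with the infinitude of $k$ entering exactly where it should, in the injectivity step.
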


\begin{proof} 
The existence of such an isomorphism follows from \cite[Theorem 4.1]{Wil14}. See also \cite[Theorem 5]{AP19} for a different, more explicit proof. For the convenience of the reader, we include an elementary proof in the special case $H= \Hh$, where $\Hh$ is the skew field of Hamilton's quaternions.

One has the following classical identity for each $a \in \Hh$: 
$${\rm{Re}}(a) = {1 \over 4}(a-iai-jaj-kak),$$
where ${\rm{Re}}(a)$ is the real component of $a$. More generally, putting 
$$y_1 = {1 \over 4}(X-iXi-jXj-kXk),$$ 
$$y_2 = {1 \over 4}(jXk-Xi-iX-kXj),$$ 
$$y_3 = {1 \over 4}(kXi-Xj-jX-iXk),$$ 
$$y_4 = {1 \over 4}(iXj-Xk-kX-jXi),$$ 
the functions $y_1,y_2,y_3,y_4 \in \Hh[X]$ obtain real values only, and one has $X = y_1+iy_2+jy_3+ky_4$. In particular, $y_1,y_2,y_3,y_4$ belong to the center of $\Hh[X]$, and we may then define a homomorphism $\phi \colon \Hh_c[t_1,t_2,t_3,t_4] \to \Hh[X]$ by $\phi(t_l) = y_l$, $1 \leq l \leq 4$, and $\phi(a) = a$ for all $a \in \Hh$. The equality $X = y_1+iy_2+jy_3+ky_4$ implies that $\phi$ is surjective.

Let $p=p(t_1, t_2, t_3, t_4) \in \Hh_c[t_1,t_2,t_3,t_4]$. By decomposing the coefficients of $p$ into their real, $i$, $j$, and $k$ components, we may present $p$ in the form $p = p_1 + p_2 i + p_3 j +p_4 k$ with $p_1,p_2,p_3,p_4 \in \R[t_1,t_2,t_3,t_4]$. If $p \neq 0$, then $p_l \neq 0$ for some $1 \leq l \leq 4$. Then there exists a non-zero tuple $a = (a_1,a_2,a_3,a_4) \in \R^4$ such that $p_l(a) \neq 0$. Hence, $\phi(p)$ does not vanish at $X = a_1+a_2i+a_3j+a_4k$, thus showing that $\phi$ is also injective. 
\end{proof}

\begin{corollary} \label{coro:isom}
Let $H$ be a skew field of finite dimension $n$ over its center $k$. Assume $k$ is infinite. Then the ring $H[X]$ has a classical right field of fractions, denoted by $H(X)$, which is isomorphic to $H_c(t_1, \dots, t_n)$.
\end{corollary}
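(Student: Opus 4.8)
The plan is to combine Proposition \ref{prop:isom} with the general principle that the classical right field of fractions, when it exists, is transported along ring isomorphisms; the only preliminary point to settle is that $H_c[t_1, \dots, t_n]$ really does admit a classical right field of fractions, which has already been recorded in \S\ref{ssec:basics2}. So the first step is to invoke Proposition \ref{prop:isom}: since $H$ has finite dimension $n$ over its (infinite) center $k$, there is a ring isomorphism $\psi \colon H[X] \to H_c[t_1, \dots, t_n]$.

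Next I would recall that $H_c[t_1, \dots, t_n]$, being an iterated polynomial ring in central variables over a skew field, is a right Ore domain and hence has a classical right field of fractions $H_c(t_1, \dots, t_n)$, as explained at the end of \S\ref{ssec:basics2}. In particular $H_c[t_1, \dots, t_n]$ is an integral domain, so its isomorphic copy $H[X]$ is an integral domain as well. The key step is then to check that the Ore condition is itself an isomorphism-invariant property: given non-zero $x, y \in H[X]$, apply $\psi$ to get non-zero $\psi(x), \psi(y) \in H_c[t_1, \dots, t_n]$, find $r', s'$ there with $\psi(x) r' = \psi(y) s' \neq 0$, and pull back via $\psi^{-1}$ to obtain $r = \psi^{-1}(r')$, $s = \psi^{-1}(s')$ with $xr = ys \neq 0$. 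Thus $H[X]$ is a right Ore domain, and by \cite[Theorem 6.8]{GW04} it has a classical right field of fractions, which is a skew field; we denote it $H(X)$.

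Finally I would argue that this $H(X)$ is isomorphic to $H_c(t_1, \dots, t_n)$. The cleanest way is to extend $\psi$ to the fraction fields: every element of $H(X)$ has the form $ab^{-1}$ with $a \in H[X]$ and $b \in H[X] \setminus \{0\}$, and one sets $\widetilde{\psi}(ab^{-1}) = \psi(a)\psi(b)^{-1}$, which makes sense because $\psi(b)$ is invertible in $H_c(t_1, \dots, t_n)$. One checks that $\widetilde{\psi}$ is well defined (if $ab^{-1} = a_1 b_1^{-1}$ in $H(X)$, use the right Ore condition to find a common expression and transport it through $\psi$), is a ring homomorphism, and is bijective with inverse built from $\psi^{-1}$ in the same fashion; surjectivity is immediate since every element of $H_c(t_1, \dots, t_n)$ is $cd^{-1}$ with $c, d \in H_c[t_1, \dots, t_n]$, $d \neq 0$. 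Alternatively, one may simply observe that $\psi(H[X]) = H_c[t_1, \dots, t_n]$ inside $H_c(t_1, \dots, t_n)$ exhibits $H_c(t_1, \dots, t_n)$ as a classical right quotient ring for (an isomorphic copy of) $H[X]$, and then invoke the uniqueness statement \cite[Proposition 1.3.4]{Coh95}. I expect the main obstacle to be purely bookkeeping: verifying that $\widetilde{\psi}$ is well defined and multiplicative requires a short manipulation with the Ore condition, since fractions over a non-commutative ring do not multiply as naively as in the commutative case; but no genuinely new idea is needed beyond Proposition \ref{prop:isom} and the material already assembled in \S\ref{ssec:basics2}.
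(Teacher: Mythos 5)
Your proposal is correct and follows essentially the same route as the paper: transport the right Ore property of $H_c[t_1,\dots,t_n]$ to $H[X]$ along the isomorphism of Proposition \ref{prop:isom}, then extend that isomorphism to the classical right fields of fractions (the paper simply cites \cite[\S1.3]{Coh95} for this last step, which covers both the explicit construction of $\widetilde{\psi}$ and the uniqueness argument you sketch). The extra detail you supply---verifying that the Ore condition is isomorphism-invariant and that $\widetilde{\psi}$ is well defined---is exactly the content of the cited references, so nothing is missing.
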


\begin{proof}
As recalled, the ring $H_c[t_1, \dots, t_n]$ is a right Ore domain. Since $H[X]$ is isomorphic to $H_c[t_1, \dots, t_n]$ by Proposition \ref{prop:isom}, $H[X]$ is also a right Ore domain and so has a classical right field of fractions. Finally, \cite[\S1.3]{Coh95} shows that the isomorphism $H[X] \cong H_c[t_1, \dots, t_n]$ from Proposition \ref{prop:isom} extends to an isomorphism $H(X) \cong H_c(t_1, \dots, t_n)$.
\end{proof}

\section{Proof of Theorem \ref{thm:main}} \label{sec:proof}

We first make the general observation that the Inverse Galois Problem over skew fields is an ``algebraic problem". More precisely:

\begin{proposition} \label{prop:iso}
Let $H_1$ and $H_2$ be isomorphic skew fields and let $G$ be a finite group. Then there exists a Galois extension of $H_1$ of group $G$ if and only if there exists a Galois extension of $H_2$ of group $G$.
\end{proposition}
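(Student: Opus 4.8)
The plan is to argue by transport of structure along the given isomorphism. Since the statement is symmetric in $H_1$ and $H_2$, it suffices to prove one implication: I assume $\varphi\colon H_1\to H_2$ is an isomorphism of skew fields and $M_1/H_1$ is a Galois extension with $\mathrm{Aut}(M_1/H_1)\cong G$, and I produce a Galois extension $M_2/H_2$ with $\mathrm{Aut}(M_2/H_2)\cong G$.

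First I would extend $\varphi$ to a bijection of sets: for cardinality reasons there is a set $M_2$ with $H_2\subseteq M_2$ together with a bijection $\psi\colon M_1\to M_2$ restricting to $\varphi$ on $H_1$ (concretely, after identifying $M_1\setminus H_1$ with a set disjoint from $H_2$, take $M_2=H_2\cup(M_1\setminus H_1)$, with $\psi=\varphi$ on $H_1$ and $\psi=\id$ on $M_1\setminus H_1$). Then I would equip $M_2$ with the ring operations transported from $M_1$ through $\psi$, namely $x+_{M_2}y:=\psi(\psi^{-1}(x)+_{M_1}\psi^{-1}(y))$ and $x\cdot_{M_2}y:=\psi(\psi^{-1}(x)\cdot_{M_1}\psi^{-1}(y))$. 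By construction $\psi$ is then a ring isomorphism $M_1\to M_2$, so $M_2$ is a skew field; and because $\varphi$ is itself a ring isomorphism, these operations restrict on $H_2$ to the original operations of $H_2$. Hence $H_2$ is a sub-skew-field of $M_2$, and $M_2/H_2$ is an extension of skew fields.

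It remains to identify the automorphism group and to verify the Galois property. Conjugation by $\psi$ sends an automorphism $\tau$ of $M_1$ to the automorphism $\psi\tau\psi^{-1}$ of $M_2$, and this is a group isomorphism $\mathrm{Aut}(M_1)\to\mathrm{Aut}(M_2)$; since $\psi(H_1)=H_2$, it carries $\mathrm{Aut}(M_1/H_1)$ onto $\mathrm{Aut}(M_2/H_2)$, so the latter is isomorphic to $G$. Finally, if $x\in M_2$ is fixed by every element of $\mathrm{Aut}(M_2/H_2)$, then $\psi^{-1}(x)$ is fixed by every element of $\mathrm{Aut}(M_1/H_1)$; as $M_1/H_1$ is Galois this gives $\psi^{-1}(x)\in H_1$, hence $x\in\psi(H_1)=H_2$. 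Thus $M_2/H_2$ is Galois, as wanted. I do not expect a genuine obstacle in this argument; the only point requiring a little care is the bookkeeping that makes $M_2$ literally an overring of $H_2$ (rather than merely admitting an embedding of $H_2$) and the verification that the transported operations agree with those of $H_2$ on the nose — both of which follow at once from $\varphi$ being an isomorphism.
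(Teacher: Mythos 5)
Your proposal is correct and follows essentially the same route as the paper: extend $\varphi$ to a bijection onto a set-theoretic overset of $H_2$ (the paper invokes the exchange principle to get the disjoint copy of $K_1\setminus H_1$, exactly your ``after identifying'' step), transport the ring structure, conjugate the automorphism group, and check the fixed-point condition. No gaps.
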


\begin{proof}
Let $\varphi : H_1 \rightarrow H_2$ be an isomorphism. Suppose there exists a Galois extension $K_1/H_1$ of group $G$.

By the exchange principle\footnote{which asserts that, given two sets $A$ and $B$, there exists a set $C$ such that $A \cap C= \emptyset$ and $|C|=|B|$ (see, e.g., \cite[page 31]{Vau95}).}, there exists a set $C$ such that $C\cap H_{2}=\emptyset$
and $|C|=|K_{1} \setminus H_{1}|$. Let $f : K_{1}\setminus H_{1} \rightarrow C$ be a bijection. Then set $K_2 = C \cup H_2$ and consider the well-defined map $\psi : K_1 \rightarrow K_2$ given by $\psi(x)= \varphi(x)$ if $x \in H_1$ and $\psi(x) = f(x)$ if $x \in K_1 \setminus H_1$. The map $\psi$ is surjective and, as $C\cap H_{2}=\emptyset$, it is also injective. Now, define the ring operations on $K_{2}$ as inherited from $K_{1}$ via $\psi$:
\[\forall x,y\in K_{2}, \, \, x \cdot y=\psi(\psi^{-1}(x) \cdot \psi^{-1}(y)),\,\,x+y=\psi(\psi^{-1}(x)+\psi^{-1}(y)).\]
Then $K_2$ is isomorphic to $K_1$ via $\psi$ and, in particular, $K_2$ is a skew field containing $H_2$.

It remains to show that $K_2/H_2$ is Galois of group $G$. To that end, note that the isomorphism $\psi : K_1 \rightarrow K_2$, whose restriction to $H_1$ equals $\varphi$, induces an isomorphism $\phi : {\rm{Aut}}(K_1/H_1) \rightarrow {\rm{Aut}}(K_2/H_2)$ (namely, $\phi(\sigma) = \psi \circ \sigma \circ \psi^{-1}$ for every $\sigma \in {\rm{Aut}}(K_1/H_1)$). Finally, if $x$ is any element of $K_2$ such that $\sigma(x) = x$ for every $\sigma \in {\rm{Aut}}(K_2/H_2)$, then we have $\tau(\psi^{-1}(x)) = \psi^{-1}(x)$ for every $\tau \in {\rm{Aut}}(K_1/H_1)$. As $K_1/H_1$ is Galois, we then have $\psi^{-1}(x) \in H_1$, and so $x \in H_2$, thus showing that $K_2/H_2$ is Galois. This concludes the proof.
\end{proof}

\begin{proof}[Proof of Theorem \ref{thm:main}]
By Corollary \ref{coro:isom}, we have $H(X) \cong H_c(t_1, \dots, t_n)$, where $n$ denotes the dimension of $H$ over $k$. Moreover, by Proposition \ref{prop:center}, the center of $H_c(t_1, \dots, t_{n-1})$ equals $k(t_1, \dots, t_{n-1})$ and the dimension of $H_c(t_1, \dots, t_{n-1})$ over $k(t_1, \dots, t_{n-1})$ is finite. Finally, $k(t_1, \dots, t_{n-1})$ contains an ample field. Hence, by \cite[Th\'eor\`eme B]{DL19}, the Inverse Galois Problem has an affirmative answer over the skew field $(H_c(t_1, \dots, t_{n-1}))_c(t_n)$, that is, over $H_c(t_1, \dots, t_{n})$ by Proposition \ref{prop:same}. It then remains to apply Proposition \ref{prop:iso} to get that the Inverse Galois Problem also has an affirmative answer over $H(X)$, thus concluding the proof.
\end{proof}

\begin{remark}
Similarly, we have this result, which follows from \cite[Proposition 12]{DL19} as Theorem \ref{thm:main} follows from \cite[Th\'eor\`eme B]{DL19}:

\vspace{2mm}

\noindent
{\it{Let $G$ be a finite group and $H$ a skew field of finite dimension $n$ over its center $k$. In each of the following cases, $G$ occurs as the Galois group of a Galois extension of $H(X)$:

\vspace{0.5mm}

\noindent
{\rm{(1)}} $G$ is abelian and $k$ is infinite,

\vspace{0.5mm}

\noindent
{\rm{(2)}} $G=S_m$ ($m \geq 3$) and $k$ is infinite,

\vspace{0.5mm}

\noindent
{\rm{(3)}} $G=A_m$ ($m \geq 4$) and $k$ has characteristic zero,

\vspace{0.5mm}

\noindent
{\rm{(4)}} $G$ is solvable, $n \geq 2$, and $k$ has positive characteristic.}}
\end{remark}

\bibliography{Biblio2}

\begin{thebibliography}{BSF13}

\bibitem[AP19]{AP19}
Gil Alon and Elad Paran.
\newblock A quaternionic {N}ullstellensatz.
\newblock {\em Manuscript}, 2019.
\newblock arXiv:1911.10595.

\bibitem[BSF13]{BSF13}
Lior Bary-Soroker and Arno Fehm.
\newblock Open problems in the theory of ample fields.
\newblock In {\em Geometric and differential {G}alois theories}, volume~27 of
  {\em S\'emin. Congr.}, pages 1--11. Soc. Math. France, Paris, 2013.

\bibitem[Coh95]{Coh95}
Paul~Moritz Cohn.
\newblock {\em Skew fields. {T}heory of general division rings}.
\newblock Encyclopedia of {M}athematics and its {A}pplications, 57. Cambridge
  {U}niversity {P}ress, Cambridge, 1995.
\newblock xvi + 500 pp.

\bibitem[DL19]{DL19}
Bruno Deschamps and Fran\c{c}ois Legrand.
\newblock Le probl\`eme inverse de {G}alois sur les corps des fractions tordus
  \`a ind\'etermin\'ee centrale. ({F}rench).
\newblock 2019.
\newblock To appear in Journal of Pure and Applied Algebra.
  \url{https://doi.org/10.1016/j.jpaa.2019.106240}.

\bibitem[GM65]{GM65}
Basil Gordon and Theodore~S. Motzkin.
\newblock On the zeros of polynomials over division rings.
\newblock {\em Trans. Amer. Math. Soc.}, 116:218--226, 1965.

\bibitem[GW04]{GW04}
Kenneth~R. Goodearl and Jr. Warfield, Robert~Breckenridge.
\newblock {\em An Introduction to noncommutative Noetherian rings}.
\newblock London Mathematical Society Student Texts, 61. Cambridge University
  Press, Cambridge, 2004.
\newblock Second edition. xxiv+344 pp.

\bibitem[Jar11]{Jar11}
Moshe Jarden.
\newblock {\em Algebraic patching}.
\newblock Springer {M}onographs in {M}athematics. Springer, Heidelberg, 2011.
\newblock xxiv + 290 pp.

\bibitem[Ore33]{Ore33}
Oystein Ore.
\newblock Theory of non-commutative polynomials.
\newblock {\em Ann. of Math. (2)}, 34(3):480--508, 1933.

\bibitem[Pop96]{Pop96}
Florian Pop.
\newblock Embedding problems over large fields.
\newblock {\em Ann. of Math. (2)}, 144(1):1--34, 1996.

\bibitem[Pop14]{Pop14}
Florian Pop.
\newblock Little survey on large fields - old $\&$ new.
\newblock In {\em Valuation theory in interaction}, EMS Ser. Congr. Rep., pages
  432--463. Eur. Math. Soc., Z\"urich, 2014.

\bibitem[Vau95]{Vau95}
Robert~L. Vaught.
\newblock {\em Set theory. An introduction}.
\newblock Birkh\"{a}user Boston, Inc., Boston, MA, 1995.
\newblock Second edition. x+167 pp.

\bibitem[Wil14]{Wil14}
Dariusz~M. Wilczynski.
\newblock On the fundamental theorem of algebra for polynomial equations over
  real composition algebras.
\newblock {\em J. Pure Appl. Algebra}, 218:1195--1205, 2014.

\end{thebibliography}
\bibliographystyle{alpha}

\end{document}